\let\newpf\proof \let\proof\relax
\def\bm{\begin{matrix}}
\def\em{\end{matrix}}
\newcommand{\bt}{\begin{thm}}
\newcommand{\et}{\end{thm}}
\newcommand{\bl}{\begin{lemma}}
\newcommand{\el}{\end{lemma}}
\newcommand{\beq}{\begin{eqnarray}}
\newcommand{\eeq}{\end{eqnarray}}
\def\be{\begin{equation}}
\def\ee{\end{equation}}
\def\ba{{\begin{align}}}
\def\ea{{\end{align}}}
\def\0{{\mathbf 0}}
\newtheorem{thm}{Theorem}[section]
\newtheorem{lemma}[thm]{Lemma}
\newtheorem{prop}[thm]{Proposition}
\theoremstyle{remark}
\newtheorem{rem}{Remark}[section]
\newtheorem{obs}[thm]{Observation}
\numberwithin{equation}{section}
\def \bn {\hfill \\ \smallskip\noindent}
\def\proof{\bn {\bf Proof.} }
\def\note#1
\newcommand{\R}{{\mathbb R}}
\newcommand{\T}{{\mathbb T}}
\newcommand{\Z}{{\mathbb Z}}
\def\B0{{\bold{0}}}
\newcommand{\hl}{\hat{\lambda}}
\def\Empty{}
\newcommand\oplabel[1]{
  \def\OpArg{#1} \ifx \OpArg\Empty {} \else
  	\label{#1}
  \fi}
\newcommand{\comm}[1]{}
\newcommand{\comment}[1]{}
\begin{document}

\title[]{Absence of point spectrum for the self-dual extended Harper's model}

\author{Rui Han}

\address{Department of Mathematics,
    University of California, Irvine CA, 92717}
\email{rhan2@uci.edu}

\begin{abstract}
We give a simple proof of absence of point spectrum for the self-dual extended Harper's model.
We get a sharp result which improves that of \cite{AJM} in the isotropic self-dual regime.
\end{abstract}

\maketitle

\section{Introduction}
We study the extended Harper's model on $l^2(\Z)$:
\begin{align}
(H_{{\lambda}, \alpha, \theta}u)_n=c_{\lambda}(\theta+n\alpha)u_{n+1}+\tilde{c}_{\lambda}(\theta+(n-1)\alpha)u_{n-1}+v(\theta+n\alpha)u_n,
\end{align}
where $c_{\lambda}(\theta)=\lambda_1e^{-2\pi i (\theta+\frac{\alpha}{2})}+\lambda_2+\lambda_3 e^{2\pi i (\theta+\frac{\alpha}{2})}$ and $v(\theta)=2\cos{2\pi \theta}$. $\tilde{c}_{\lambda}(\theta)=\overline{c_{\lambda}(\theta)}$ for $\theta\in \T$ and its analytic extension when $\theta\notin \T$.
We refer to ${\lambda}=(\lambda_1,\lambda_2, \lambda_3)$ as coupling constants, $\theta\in \T=[0,1]$ as the phase and $\alpha$ as the frequency.

In \cite{JM12} the authors partitioned the parameter space into the following three regions.
\begin{center}
\begin{tikzpicture}[thick, scale=1]
    \draw[->] (-10,-1) -- (-3,-1) node[below] {$\lambda_2$};
    \draw[->] (-10,-1) -- (-10,6) node[right] {$\lambda_1+\lambda_3$};
    \draw [ ] plot [smooth] coordinates { (- 7, 2) (-4, 5) };
    \draw [ ] plot [smooth] coordinates { (- 7, 2) (-7,-1) };
    \draw [ ] plot [smooth] coordinates { (-10, 2) (-7, 2) };

    \draw(-  4,   5) node [above] {$\lambda_1+\lambda_3=\lambda_2$};
    \draw(-  7,  -1) node [below] {$1$};
    \draw(- 10,   2) node [left]  {$1$};
    \draw(-9.2, 0.5) node [color=blue][right] {Region I};
    \draw(-  5,   1) node [color=blue][right] {Region II};
    \draw(-9.2, 3.6) node [color=blue][right] {Region III};

    \draw(-  7, 0.5) node [color=red][right] {$\mathrm{L}_{\mathrm{II}}$};
    \draw(-8.5,   2) node [color=red][above] {$\mathrm{L}_{\mathrm{I}}$};
    \draw(-5.5, 3.7) node [color=red][left] {$\mathrm{L}_{\mathrm{III}}$};

\end{tikzpicture}
\end{center}
\begin{description}
\item[Region I] $0 \leq \lambda_{1}+\lambda_{3} \leq 1, 0 < \lambda_{2} \leq 1$,
\item[Region II] $0 \leq \lambda_{1}+\lambda_{3} \leq \lambda_{2}, 1 \leq \lambda_{2} $,
\item[Region III] $\max\{1,\lambda_{2}\} \leq \lambda_{1}+\lambda_{3}$, $\lambda_2>0$.
\end{description}
According to the action of the {\it duality transformation} $\sigma: \lambda=(\lambda_1, \lambda_2, \lambda_3)\rightarrow \hat{\lambda}=(\frac{\lambda_3}{\lambda_2}, \frac{1}{\lambda_2}, \frac{\lambda_1}{\lambda_2})$, we have the following observation \cite{JM12}:
\begin{obs}
$\sigma$ is a bijective map on $0\leq \lambda_1+\lambda_3,\ 0<\lambda_2$.
\begin{itemize}
\item[(i)] $\sigma(\mathrm{I}^\circ) = \mathrm{II}^\circ$, $\sigma(\mathrm{III}^\circ) = \sigma(\mathrm{III}^\circ)$
\item[(ii)] Letting $\mathrm{L}_\mathrm{I}:=\{\lambda_1 + \lambda_3=1, 0 < \lambda_2 \leq 1\}$, $\mathrm{L}_{\mathrm{II}}:=\{0 \leq \lambda_1 + \lambda_3 \leq 1, \lambda_2 = 1\}$, and $\mathrm{L}_{\mathrm{III}}:=\{1 \leq \lambda_1 + \lambda_3 = \lambda_2\}$, $\sigma(\mathrm{L}_\mathrm{I}) = \mathrm{L}_{\mathrm{III}}$ and $\sigma(\mathrm{L}_{\mathrm{II}})= \mathrm{L}_{\mathrm{II}}$.
\end{itemize}
\end{obs}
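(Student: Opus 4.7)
The plan is to show that $\sigma$ is an involution on the region $\{\lambda_1+\lambda_3 \geq 0,\ \lambda_2 > 0\}$; bijectivity on this region then follows at once. I would start with the direct computation
\begin{align*}
\sigma^2(\lambda_1,\lambda_2,\lambda_3) \;=\; \sigma\!\left(\tfrac{\lambda_3}{\lambda_2},\tfrac{1}{\lambda_2},\tfrac{\lambda_1}{\lambda_2}\right) \;=\; (\lambda_1,\lambda_2,\lambda_3),
\end{align*}
and observe that the image satisfies $1/\lambda_2 > 0$ and $(\lambda_1+\lambda_3)/\lambda_2 \geq 0$ whenever $\lambda_2 > 0$ and $\lambda_1+\lambda_3 \geq 0$, so the domain is preserved. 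Everything else reduces to matching defining inequalities against one another under $\sigma$.

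For part (i), the strategy is direct substitution. For $\lambda \in \mathrm{I}^\circ$, so $0 \leq \lambda_1+\lambda_3 < 1$ and $0 < \lambda_2 < 1$, I would compute $\hat\lambda_2 = 1/\lambda_2 > 1$ and $\hat\lambda_1 + \hat\lambda_3 = (\lambda_1+\lambda_3)/\lambda_2 < 1/\lambda_2 = \hat\lambda_2$, placing $\hat\lambda$ in $\mathrm{II}^\circ$; the opposite containment $\sigma(\mathrm{II}^\circ) \subseteq \mathrm{I}^\circ$ will follow automatically from $\sigma^2 = \mathrm{id}$. For $\mathrm{III}^\circ$ I would split into the cases $\lambda_2 \geq 1$ and $\lambda_2 < 1$ and check in each that $\max(1, 1/\lambda_2) < (\lambda_1+\lambda_3)/\lambda_2$ using the hypothesis $\max(1,\lambda_2) < \lambda_1+\lambda_3$; involution then gives $\sigma(\mathrm{III}^\circ) = \mathrm{III}^\circ$. (I read the displayed $\sigma(\mathrm{III}^\circ) = \sigma(\mathrm{III}^\circ)$ in the statement as a typo for this equality.)

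For part (ii) the same strategy applies at the boundary. For $\lambda \in \mathrm{L}_{\mathrm{I}}$ (so $\lambda_1+\lambda_3 = 1$, $0 < \lambda_2 \leq 1$) the identities $\hat\lambda_1+\hat\lambda_3 = 1/\lambda_2 = \hat\lambda_2 \geq 1$ land $\hat\lambda$ in $\mathrm{L}_{\mathrm{III}}$; for $\lambda \in \mathrm{L}_{\mathrm{II}}$ one gets $\hat\lambda_2 = 1$ and $0 \leq \hat\lambda_1 + \hat\lambda_3 = \lambda_1+\lambda_3 \leq 1$, so $\hat\lambda \in \mathrm{L}_{\mathrm{II}}$; in both cases the opposite inclusion is again automatic. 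The only spot where I expect even minor friction is the $\mathrm{III}^\circ$ case split, since that defining inequality involves a maximum of two quantities that both transform nontrivially under $\sigma$; everything else is a sequence of one-line substitutions, so I do not anticipate any real obstacle.
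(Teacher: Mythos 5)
The paper does not actually prove this Observation --- it is imported verbatim from \cite{JM12} and stated without argument --- so there is nothing internal to compare against; your direct verification is the natural (and essentially the only) proof, and your computations are all correct: $\sigma^2=\mathrm{id}$, the substitution checks for $\mathrm{I}^\circ$, $\mathrm{III}^\circ$, $\mathrm{L}_\mathrm{I}$, $\mathrm{L}_{\mathrm{II}}$ all go through, and your reading of $\sigma(\mathrm{III}^\circ)=\sigma(\mathrm{III}^\circ)$ as a typo for $\sigma(\mathrm{III}^\circ)=\mathrm{III}^\circ$ is the intended one. One small logical slip: the claim that $\sigma(\mathrm{II}^\circ)\subseteq \mathrm{I}^\circ$ ``follows automatically from $\sigma^2=\mathrm{id}$'' is not right as stated. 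From $\sigma(\mathrm{I}^\circ)\subseteq\mathrm{II}^\circ$ and involutivity you get only $\mathrm{I}^\circ\subseteq\sigma(\mathrm{II}^\circ)$; to upgrade the inclusion to the equality $\sigma(\mathrm{I}^\circ)=\mathrm{II}^\circ$ you need the reverse containment $\sigma(\mathrm{II}^\circ)\subseteq\mathrm{I}^\circ$, which requires its own (one-line) substitution check: for $\lambda_2>1$ and $\lambda_1+\lambda_3<\lambda_2$ one has $\hat\lambda_2=1/\lambda_2<1$ and $\hat\lambda_1+\hat\lambda_3=(\lambda_1+\lambda_3)/\lambda_2<1$. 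The same remark applies to $\mathrm{L}_{\mathrm{III}}\to\mathrm{L}_\mathrm{I}$. These are trivial to supply, so the plan is sound; just do not label them as automatic consequences of the involution alone.
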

As $\sigma$ bijectively maps $\mathrm{III} \cup \mathrm{L}_{\mathrm{II}}$ onto itself, the literature refers to $\mathrm{III} \cup \mathrm{L}_{\mathrm{II}}$ as the {\it self-dual regime}. We further divide $\mathrm{III}$ into $\mathrm{III}_{\lambda_1=\lambda_3}$ ({\it isotropic self-dual regime}) and $\mathrm{III}_{\lambda_1\neq \lambda_3}$ ({\it anisotropic self-dual regime}).

A complete understanding of the spectral properties of the extended Harper's model for a.e. $\theta$ has been established:
\begin{thm}\label{AJM1}\cite{AJM}The following Lebesgue decomposition of the spectrum of $H_{\lambda,\alpha,\theta}$ holds for a.e.$\theta$.
\begin{itemize}
\item For all Diophantine $\alpha$, for Region $\mathrm{I}$, $H_{\lambda, \alpha, \theta}$ has pure point spectrum.
\item For all irrational $\alpha$, for Regions $\mathrm{II}$, $\mathrm{III}_{\lambda_1\neq \lambda_3}$, $H_{\lambda,\alpha,\theta}$ has purely absolutely continuous spectrum.
\item For all irrational $\alpha$, for Region $\mathrm{III}_{\lambda_1=\lambda_3}$, $H_{\lambda,\alpha,\theta}$ has purely singular continuous spectrum.
\end{itemize}
\end{thm}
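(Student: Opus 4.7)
The theorem splits naturally along the three regions, so I would organize the proof in four pieces. Throughout the plan, let $A_E(\theta)$ denote the transfer cocycle associated to the eigenvalue equation $H_{\lambda,\alpha,\theta}u=Eu$, and $L(E)=L_{\lambda,\alpha}(E)$ its Lyapunov exponent.

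\medskip\noindent\emph{Region I (pure point, Diophantine $\alpha$).} First I would show that $L(E)>0$ on the spectrum $\Sigma_{\lambda,\alpha}$; in the extended Harper setting this can be obtained via a Herman-style subharmonicity lower bound after normalizing the cocycle by $|c_\lambda|$. With positive Lyapunov exponent in hand, I would invoke a Jitomirskaya-type non-perturbative Anderson localization theorem for quasi-periodic operators on $\ell^2(\Z)$: for Diophantine $\alpha$ and a.e.\ $\theta$, every polynomially bounded generalized eigenfunction decays exponentially, yielding a complete basis of $\ell^2$ eigenfunctions and hence pure point spectrum.

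\medskip\noindent\emph{Region II (purely a.c., irrational $\alpha$).} Here I would use Aubry duality: the conjugation of $H_{\lambda,\alpha,\theta}$ by a Fourier-type transform realizes the map $\sigma$ from the observation. Since $\sigma(\mathrm{II})=\mathrm{I}$, the localized eigenfunctions produced in Region I for $H_{\hat\lambda,\alpha,\cdot}$ dualize, in the integrated-in-phase sense, to Bloch waves for $H_{\lambda,\alpha,\cdot}$. Combined with preservation of the integrated density of states under $\sigma$, this converts localization at full $\theta$-measure in Region I into purely absolutely continuous spectrum at full $\theta$-measure in Region II.

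\medskip\noindent\emph{Region III (self-dual).} Duality maps this regime to itself, so one cannot directly import from Region I. For the anisotropic subcase $\lambda_1\neq\lambda_3$, I would appeal to Avila's global theory of one-frequency $\SL(2,\C)$-cocycles: show that $A_E$ is subcritical on $\Sigma_{\lambda,\alpha}$ by exploiting that the asymmetry between $\lambda_1$ and $\lambda_3$ moves the Avila acceleration off the critical value attained only at isotropic self-duality. Subcriticality plus almost reducibility then give purely absolutely continuous spectrum for a.e.\ $\theta$. For the isotropic subcase $\lambda_1=\lambda_3$, I would prove singular continuous spectrum by ruling out both pieces: (i) the absence of absolutely continuous spectrum follows from $L(E)>0$ on $\Sigma_{\lambda,\alpha}$ (in the spirit of the Avila--Jitomirskaya--Marx Lyapunov computation) combined with the Ishii--Kotani theorem; (ii) the absence of point spectrum should be extracted from self-duality itself, by arguing that a would-be $\ell^2$ eigenfunction $u$ of $H_{\lambda,\alpha,\theta}$ transforms under $\sigma$ into a bounded non-$\ell^2$ solution of $H_{\lambda,\alpha,\hat\theta}u'=Eu'$ at the same energy, and the existence of both must be pushed to a contradiction valid for a.e.\ $\theta$.

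\medskip\noindent\emph{Main obstacle.} The central difficulty, and precisely the one advertised in the abstract, is step (ii): ruling out point spectrum in the isotropic self-dual regime without any arithmetic restriction on $\alpha$. The naive dual-eigenfunction argument only manufactures a candidate second solution; promoting it to a genuine generalized eigenfunction that rigorously obstructs $\ell^2$ decay of $u$, while keeping the argument quantitative enough to hold for every irrational $\alpha$, is exactly the place where one expects a simple but carefully chosen use of the self-dual intertwining to do the work.
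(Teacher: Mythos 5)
You should first note a point of bookkeeping: Theorem~\ref{AJM1} is quoted from \cite{AJM} and is not proved anywhere in this paper; the paper's own contribution is Theorem~\ref{main}, which supplies (and sharpens) exactly one ingredient of the \cite{AJM} scheme, namely the absence of point spectrum in Region $\mathrm{III}$. So there is no in-paper proof to compare against, and your proposal has to be judged as a reconstruction of the \cite{AJM} argument. As such it is a reasonable roadmap, but it contains two concrete gaps. The first is Region $\mathrm{II}$: you propose to dualize localization from Region $\mathrm{I}$, but localization there is only available for Diophantine $\alpha$, while the Region $\mathrm{II}$ statement is for \emph{all} irrational $\alpha$; Aubry duality cannot upgrade the arithmetic condition, so this route only yields the Diophantine case. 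To reach all irrational $\alpha$ one needs the subcriticality/almost-reducibility machinery you reserve for Region $\mathrm{III}$ (or some other mechanism) in Region $\mathrm{II}$ as well.

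The second gap is the one you flag yourself, and it is precisely the content of the present paper: in step (ii) the phrase \lq\lq must be pushed to a contradiction\rq\rq\ is not an argument. The actual mechanism is the following. If $u\in l^2(\Z)$ solves $H_{\lambda,\alpha,\theta}u=Eu$, Fourier transforming gives $u(x)\in L^2(\T)$ satisfying the dual equation at $\hat\lambda$; evaluating at $\pm x$ one forms the matrix $M_\theta(x)$ built from $u(\pm x)$ and $u(\pm(x-\alpha))$, and the cocycle relation $A_{\hat\lambda,E}(x)M_\theta(x)=M_\theta(x+\alpha)R_\theta$ forces $|\det M_\theta(x)|=b/|c|_{\hat\lambda}(x-\alpha)$ with $b>0$ whenever $\theta$ is $\alpha$-irrational (equation (\ref{det=})). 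When $c_{\hat\lambda}$ has a real zero (the isotropic case and the case $\lambda_1+\lambda_3=1$) this contradicts $M_\theta\in L^2(\T)$ outright, since $1/|c|_{\hat\lambda}\notin L^1$. Otherwise one iterates the determinant recursion $q_{m_l}$ times along a carefully chosen subsequence of denominators and uses Lemmas~\ref{analyticexp} and~\ref{uniform0} to show that the product $\prod \tilde c_{\hat\lambda}/\prod c_{\hat\lambda}$ behaves like $e^{4\pi i q_{m_l}x}$ up to controllable errors, which is incompatible with $\|g(\cdot+q_{m_l}\alpha)-g\|_{L^1}\to 0$. Without an argument of this kind (or the distinct one in \cite{AJM}), your plan does not close, and in particular the \lq\lq a.e.\ $\theta$\rq\rq\ claim for purely singular continuous spectrum in $\mathrm{III}_{\lambda_1=\lambda_3}$ remains unproved.
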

As pointed out in \cite{AJM}, the main missing link between \cite{JM12, JM13} and Theorem \ref{AJM1} is the following theorem, excluding eigenvalues in the self-dual regime. We say $\theta$ is $\alpha$-rational if $2\theta\in \Z\alpha+\Z$, otherwise we say $\theta$ is $\alpha$-irrational.
\begin{thm}\cite{AJM}\label{AJM2}
For all irrational $\alpha$,
\begin{itemize}
\item for $\lambda\in \mathrm{III}_{\lambda_1\neq \lambda_3}\cup \mathrm{L}_{\mathrm{II}}$, $H_{\lambda,\alpha,\theta}$ has empty point spectrum for all $\alpha$-irrational $\theta$.
\item for $\lambda\in \mathrm{III}_{\lambda_1=\lambda_3}$, $H_{\lambda,\alpha,\theta}$ has empty point spectrum for a.e. $\theta$.
\end{itemize}
\end{thm}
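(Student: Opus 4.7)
The plan is to proceed by Aubry duality. Suppose for contradiction that $u \in \ell^2(\Z)\setminus\{0\}$ satisfies $H_{\lambda,\alpha,\theta}u = Eu$ for some $\alpha$-irrational $\theta$. I would define the Bloch-type transform $\phi(x) := \sum_{n \in \Z} u_n\, e^{2\pi i n(x+\theta)} \in L^2(\T)\setminus\{0\}$, and use the duality identity of \cite{JM12} intertwining $H_{\lambda,\alpha,\theta}$ with the dual $H_{\hat\lambda,\alpha,\cdot}$ (where $\hat\lambda=\sigma(\lambda)$) to check that for every $x \in \T$ the sequence $\psi^{(x)}_n := \phi(x+n\alpha)$, twisted by an explicit unimodular cocycle in $(n,x,\theta)$, satisfies $H_{\hat\lambda,\alpha,x}\psi^{(x)} = E\psi^{(x)}$ pointwise. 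By the observation in the excerpt, $\hat\lambda$ again lies in the self-dual regime $\mathrm{III}\cup\mathrm{L}_{\mathrm{II}}$, and $\sigma$ preserves the isotropic/anisotropic distinction.

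The next step is to use $\phi \in L^2(\T)$ to control the growth of $\psi^{(x)}$. Birkhoff's ergodic theorem applied to $|\phi|^2$ along the irrational rotation $x \mapsto x+\alpha$ gives $\sum_{|n|\leq N}|\psi^{(x)}_n|^2 = O(N)$ for Lebesgue-a.e.~$x$, so $|\psi^{(x)}_n|$ grows subpolynomially on a full-measure set of $x$. In particular, $E$ is a generalized eigenvalue of $H_{\hat\lambda,\alpha,x}$ with nonzero polynomially bounded formal eigenfunction for a.e.~$x$, and the assignment $x \mapsto \psi^{(x)}$ is a nontrivial measurable family. The contradiction then comes from the self-dual cocycle structure: in the anisotropic regime $\mathrm{III}_{\lambda_1\neq\lambda_3}\cup\mathrm{L}_{\mathrm{II}}$, the Schr\"odinger cocycle of $H_{\hat\lambda,\alpha,x}$ at energy $E$ has vanishing Lyapunov exponent but does not admit a measurable $L^2$ invariant section on the spectrum (this is the reducibility/rotation analysis carried out in \cite{JM12,JM13}); a Fubini argument converts the polynomially bounded family $\psi^{(x)}$ into such a section via Parseval in $x$, yielding the desired contradiction. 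In the isotropic case $\mathrm{III}_{\lambda_1=\lambda_3}$ one has $\hat\lambda=\lambda$, so the Fubini argument collapses and only an a.e.~$\theta$ statement survives, which is precisely the sharp conclusion of Theorem~\ref{AJM2}.

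The main obstacle I anticipate is promoting the a.e.~$x$ Birkhoff conclusion into a for-\emph{every}-$\alpha$-irrational-$\theta$ statement in the anisotropic regime. This requires \emph{uniform} rather than measurable cocycle control: specifically, one must rule out the possibility that the exceptional $x$-set of the Birkhoff step happens to carry an eigenfunction arising from a different $\theta$. The standard route is to exploit the analyticity of the cocycle in $\theta$, the continuity of the integrated density of states (both available in this setting from \cite{JM12,JM13}), and an explicit arithmetic removal of the $\alpha$-rational exceptional set using the definition of $\alpha$-irrationality ($2\theta \notin \Z\alpha+\Z$); this last step is what forces the appearance of the $\alpha$-irrationality hypothesis in the anisotropic case and is where I expect the bulk of the technical work to lie.
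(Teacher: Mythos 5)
Your starting point (Aubry duality applied to a putative $\ell^2$ eigenfunction) matches the paper, but after that the argument diverges and the actual mechanism is missing. The engine of the proof of Theorem~\ref{AJM2} (reproduced in Section~4 of this paper, following \cite{AJM}) is the matrix identity $A_{\hat{\lambda},E/\lambda_2}(x)M_{\theta}(x)=M_{\theta}(x+\alpha)R_{\theta}$ for the $L^2$ matrix $M_{\theta}(x)$ built from $u(x)$ and $u(-x)$, and above all its determinant: since $\det A_{\hat{\lambda},E/\lambda_2}(x)=\tilde{c}_{\hat{\lambda}}(x-\alpha)/c_{\hat{\lambda}}(x)$, the function $|c|_{\hat{\lambda}}(x-\alpha)\,|\det M_{\theta}(x)|$ is invariant under the rotation by $\alpha$, hence a.e.\ equal to a constant $b$, and the hypothesis that $\theta$ is $\alpha$-irrational is used exactly once --- to show $b>0$, i.e.\ that $\det M_{\theta}\not\equiv 0$ (if the determinant vanished identically the two columns would be proportional and one derives $2\theta\in\Z\alpha+\Z$). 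This is where $2\theta\notin\Z\alpha+\Z$ enters; it has nothing to do with analyticity in $\theta$, continuity of the integrated density of states, or an exceptional set in a Birkhoff argument, which is where you propose to look for it. Without the identity $|\det M_{\theta}(x)|=b/|c|_{\hat{\lambda}}(x-\alpha)$ with $b>0$ you have no quantitative handle from which to derive a contradiction.

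Second, the contradiction you invoke --- that the dual cocycle has zero Lyapunov exponent but admits no measurable $L^2$ invariant section on the spectrum --- is not a result available from \cite{JM12,JM13}, and it is implausible as stated: in $\mathrm{III}_{\lambda_1\neq\lambda_3}$ the spectrum is purely absolutely continuous and the dual cocycle is reducible for a.e.\ energy, so invariant sections do exist. The actual obstructions are concrete and split into two cases. When $c_{\hat{\lambda}}$ has a real zero (by Remark~\ref{obsrem} this happens exactly for $\mathrm{III}_{\lambda_1=\lambda_3}$ and $\mathrm{III}_{\lambda_1\neq\lambda_3}\cap\{\lambda_1+\lambda_3=1\}$, the latter being dual to $\mathrm{L}_{\mathrm{III}}$), one has $1/|c|_{\hat{\lambda}}\notin L^1(\T)$, which contradicts $\det M_{\theta}\in L^1(\T)$. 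When $\lambda_1+\lambda_3>1$ and $\lambda_1\neq\lambda_3$ the point is topological: $c_{\hat{\lambda}}/|c|_{\hat{\lambda}}$ has nonzero winding number (Lemma~\ref{analyticexp}), so iterating the determinant relation over $q_{m_l}$ steps produces an oscillating factor $e^{4\pi i q_{m_l}x}$ that prevents $g(x+q_{m_l}\alpha)\to g(x)$ in $L^1$, contradicting continuity of translations on $L^1$; Lemma~\ref{uniform0} is what disposes of the Birkhoff sums of the zero-mean correction $f$. Finally, a small but telling error: $\sigma$ is not the identity on $\mathrm{III}_{\lambda_1=\lambda_3}$ (it sends $(\lambda_1,\lambda_2,\lambda_3)$ to $(\lambda_3/\lambda_2,1/\lambda_2,\lambda_1/\lambda_2)$), so your explanation of why only an a.e.\ statement survives in the isotropic case does not hold up; indeed the whole point of the present paper is that the isotropic case in fact holds for every $\alpha$-irrational $\theta$.
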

In \cite{AJM} the authors had to exclude more phases than $\alpha$-rational $\theta$ in the isotropic self-dual regime. 

In this paper we give a simple proof of the following theorem.
\begin{thm}\label{main}
For all irrational $\alpha$, for $\lambda\in \mathrm{III}$, $H_{\lambda,\alpha,\theta}$ has empty point spectrum for all $\alpha$-irrational $\theta$.
\end{thm}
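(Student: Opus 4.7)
The plan is a sharpened Aubry duality argument that extracts a cohomological obstruction from the $\alpha$-irrationality of $\theta_0$. Assume for contradiction that $u\in\ell^2(\Z)\setminus\{0\}$ satisfies $H_{\lambda,\alpha,\theta_0}u=Eu$ with $\lambda\in\mathrm{III}$ and $\theta_0$ $\alpha$-irrational. Pass to the Fourier dual $U(x):=\sum_n u_n e^{2\pi inx}\in L^2(\T)$. A direct Fourier expansion of the eigenvalue equation yields the dual identity
\[
c_{\hat\lambda}(x)\,e^{2\pi i\theta_0}\,U(x+\alpha) + \tilde c_{\hat\lambda}(x-\alpha)\,e^{-2\pi i\theta_0}\,U(x-\alpha) + v(x)U(x) = (E/\lambda_2)U(x),
\]
where $\hat\lambda=\lambda_2^{-1}(\lambda_3,1,\lambda_1)$ remains in $\mathrm{III}$ by self-duality. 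Using the reflection identities $c_\mu(-\xi-\alpha)=c_{\mu'}(\xi)$ and $\tilde c_\mu=c_{\mu'}$ with $\mu'=(\mu_3,\mu_2,\mu_1)$, the reflected function $U^{\vee}(x):=U(-x)\in L^2(\T)$ satisfies the same dual equation but with $\theta_0$ replaced by $-\theta_0$.

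Next I form the bilinear pairing
\[
L(x):=c_{\hat\lambda}(x)U(x+\alpha)\overline{U(-x)}-\tilde c_{\hat\lambda}(x)U(x)\overline{U(-x-\alpha)}.
\]
Multiplying the dual equation for $U$ by $\overline{U^{\vee}(x)}$, the complex conjugate of the dual equation for $U^{\vee}$ by $U(x)$, and subtracting collapses to the cocycle relation $L(x+\alpha)=e^{-4\pi i\theta_0}L(x)$ a.e.\ on $\T$. Since $L\in L^1(\T)$ (by Cauchy--Schwarz), expanding in Fourier yields $\hat L_n(e^{2\pi in\alpha}-e^{-4\pi i\theta_0})=0$, so a nontrivial coefficient forces $2\theta_0\in\Z\alpha+\Z$, i.e.\ $\theta_0$ $\alpha$-rational; this contradicts the hypothesis, so $L\equiv 0$. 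This yields the pointwise identity $c_{\hat\lambda}(x)U(x+\alpha)\overline{U(-x)}=\tilde c_{\hat\lambda}(x)U(x)\overline{U(-x-\alpha)}$.

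To close the argument I apply the analogous construction with a Wronskian-type pairing bilinear in $U$ (without conjugation),
\[
W(x):=c_{\hat\lambda}(x)\bigl[e^{2\pi i\theta_0}U(x+\alpha)U(-x)-e^{-2\pi i\theta_0}U(-x-\alpha)U(x)\bigr],
\]
which the same derivation shows is $\alpha$-shift invariant, hence $W\equiv W_0$. If $W_0=0$, the two formal generalized eigenfunctions $w^{(1)}_n:=e^{2\pi in\theta_0}U(x_0+n\alpha)$ and $w^{(2)}_n:=e^{-2\pi in\theta_0}U(-x_0-n\alpha)$ of $H_{\hat\lambda,\alpha,x_0}$ at $E/\lambda_2$ are proportional, and comparing $n=0,1$ gives the proportionality constant $\kappa(x_0)=U(-x_0)/U(x_0)$ satisfying $\kappa(x_0+\alpha)=e^{4\pi i\theta_0}\kappa(x_0)$; the same Fourier cohomology argument forces $\kappa\equiv 0$, hence $U\equiv 0$, contradicting $u\ne 0$. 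If $W_0\ne 0$, combining $W\equiv W_0$ with the pointwise identity from $L\equiv 0$ and the Riccati form of the dual equation forces, in the isotropic subregime where $c_{\hat\lambda}$ is real, $U(x+\alpha)=\pm U(x)$; by $\alpha$-irrationality this makes $U$ constant, hence $u=u_0\delta_0$, and substituting back into $Hu=Eu$ at $n=0,\pm 1$ forces $\tilde c_\lambda(\theta_0)=c_\lambda(\theta_0-\alpha)=0$, which in the isotropic self-dual regime combines to $2\theta_0\in\Z\alpha+\Z$, again a contradiction.

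The step I expect to be the main obstacle is the $W_0\ne 0$ branch in the anisotropic subregime $\lambda_1\ne\lambda_3$, where $c_{\hat\lambda}$ is genuinely complex and the Riccati argument does not immediately collapse to $U(x+\alpha)=\pm U(x)$; there one likely needs a further pairing (or a modified Wronskian absorbing the non-reality of $c_{\hat\lambda}$) to produce a second $e^{\pm 4\pi i\theta_0}$-cocycle. The conceptual strength of this approach is that both cohomological obstructions involve the multiplier $e^{\pm 4\pi i\theta_0}$ and are killed off by $\alpha$-irrationality of $\theta_0$ \emph{independently} of where $c_{\hat\lambda}$ vanishes---precisely the obstruction that forced \cite{AJM} to exclude extra phases in the isotropic self-dual subregime.
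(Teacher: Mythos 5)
Your setup (the dual equation, the reflected solution $U^{\vee}$, and the sesquilinear pairing $L$ with multiplier $e^{-4\pi i\theta_0}$) is sound, and it is essentially the mechanism by which \cite{AJM} establishes the identity $|\det M_\theta(x)|=b/|c|_{\hat\lambda}(x-\alpha)$ that the paper quotes as (\ref{det=}). The fatal gap is the claim that $W(x)$ is $\alpha$-shift invariant. For the extended Harper difference equation $c_nw_{n+1}+\tilde c_{n-1}w_{n-1}+(v_n-E)w_n=0$ with \emph{complex} off-diagonal coefficients ($\tilde c=\overline{c}$ on $\T$), the Wronskian identity for two solutions reads $c_nV_n=\tilde c_{n-1}V_{n-1}$ with $V_n=w^{(1)}_{n+1}w^{(2)}_n-w^{(2)}_{n+1}w^{(1)}_n$; hence your $W$ satisfies $W(x+\alpha)=\frac{\tilde c_{\hat\lambda}(x)}{c_{\hat\lambda}(x)}\,W(x)$, not $W(x+\alpha)=W(x)$. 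Only $|W|$ is invariant, which is again just the modulus identity. Consequently the dichotomy $W_0=0$ versus $W_0\neq 0$ is not available and the closing argument collapses. This is not a technicality: in the anisotropic regime with $\lambda_1+\lambda_3>1$ the unimodular function $\tilde c_{\hat\lambda}/c_{\hat\lambda}$ has topological degree $2$ (this is the content of Lemma \ref{analyticexp}), and ruling out an $L^1$ solution of the resulting multiplicative cohomological equation is precisely the hard part of the theorem. The paper spends all of Section 5 on it: it iterates the cocycle over denominators $q_{m_l}$ chosen as in Lemma \ref{uniform0} so that the Birkhoff sums of the mean-zero phase $f$ vanish, and the residual factor $e^{4\pi iq_{m_l}x}$ then yields a quantitative lower bound contradicting $L^1$-continuity of translation. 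Your proposal has no substitute for this step; you flag the anisotropic $W_0\neq 0$ branch as ``the main obstacle,'' but the obstruction occurs earlier, in the non-invariance of $W$ itself.

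There are two further problems even where $W$ would be invariant. In the $W_0=0$ branch, $\kappa=U(-x)/U(x)$ is a ratio of $L^2$ functions and is not known to lie in $L^1$, so the Fourier-coefficient argument cannot be applied to it as stated. In the isotropic branch, the assertion that the Riccati form forces $U(x+\alpha)=\pm U(x)$ is unsubstantiated; it is also unnecessary, since for $\lambda\in\mathrm{III}_{\lambda_1=\lambda_3}$ (and on the line $\lambda_1+\lambda_3=1$) the dual symbol $c_{\hat\lambda}$ has a real zero by Observation \ref{sing}, so $1/|c|_{\hat\lambda}\notin L^1(\T)$ and the modulus identity already contradicts $\det M_\theta\in L^1(\T)$ --- this is the paper's two-line Section 4.
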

\begin{rem}
Our result for the isotropic self-dual regime $\mathrm{III}_{\lambda_1=\lambda_3}$ is sharp. Indeed, according to Proposition 5.1 in \cite{AJM}, for $\alpha$-rational $\theta$, $H_{\lambda, \alpha, \theta}$ has point spectrum.
\end{rem}

We organize this paper in the following way: in Section 2 we include some preliminaries, in Section 3 we present two lemmas that will be used in Section 5, then we deal with $\mathrm{III}_{\lambda_1=\lambda_3}$ and $\mathrm{III}_{\lambda_1\neq \lambda_3}\cap \{\lambda_1+\lambda_3=1\}$ in Section 4 and $\mathrm{III}_{\lambda_1\neq \lambda_3}\cap \{\lambda_1+\lambda_3>1\}$ in Section 5.

\section{Preliminaries}
\subsection{Rational approximation}
Let $\{\frac{p_m}{q_m}\}$ be the continued fraction approximants of $\alpha$, then
\begin{align}\label{qnqn+1}
\frac{1}{2q_{m+1}}\leq \|q_m\alpha\|_{\T}\leq \frac{1}{q_{m+1}}.
\end{align}
The exponent $\beta(\alpha)$ is defined as follows
\begin{align}\label{defbeta}
\beta(\alpha)=\limsup_{m\rightarrow\infty}\frac{\ln{q_{m+1}}}{q_m}.
\end{align}
It describes how well is $\alpha$ approximated by rationals. 

\subsection{Self-dual extended Harper's model}
Let $|c|_{\lambda}(\theta)=\sqrt{c_{\lambda}(\theta)\tilde{c}_{\lambda}(\theta)}$ be the analytic function that coincides with $|c_{\lambda}(\theta)|$ when $\theta\in \T$. 

The presence of singularities of $c_{\lambda}(\theta)$ is explicit:
\begin{obs}(e.g. \cite{AJM})\label{sing}
The function $c_{\lambda}(\theta)$ has at most two zeros. Necessary conditions for real roots are $\lambda\in \mathrm{III}_{\lambda_1=\lambda_3}$ or $\lambda\in \mathrm{III}_{\lambda_1\neq \lambda_3}\cap \{\lambda_1+\lambda_3=\lambda_2\}$. Moreover,
\begin{itemize}
\item for $\lambda\in \mathrm{III}_{\lambda_1=\lambda_3}$, $c_{\lambda}(\theta)$ has real roots determined by
\begin{align}
2\lambda_3\cos{2\pi(\theta+\frac{\alpha}{2})}=-\lambda_2, 
\end{align}
and giving rise to a double root at $\theta=\frac{1}{2}-\frac{\alpha}{2}$ if $\lambda\in \mathrm{III}_{\lambda_1=\lambda_3}\cap \{\lambda_1+\lambda_3=\lambda_2\}$.
\item for $\lambda\in \mathrm{III}_{\lambda_1\neq \lambda_3}\cap \{\lambda_1+\lambda_3=\lambda_2\}$, $c_{\lambda}(\theta)$ has only one simple real root at $\theta=\frac{1}{2}-\frac{\alpha}{2}$.
\end{itemize}
\end{obs}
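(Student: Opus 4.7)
\proof[Proof plan for Observation \ref{sing}]
The plan is to reduce the analysis of the zeros of $c_{\lambda}(\theta)=\lambda_1 e^{-2\pi i(\theta+\alpha/2)}+\lambda_2+\lambda_3 e^{2\pi i(\theta+\alpha/2)}$ to a single quadratic in a new variable. Substituting $w=e^{2\pi i(\theta+\alpha/2)}\in\C^{*}$ and multiplying by $w$, I would write
\begin{equation*}
w\,c_{\lambda}(\theta)=\lambda_3 w^{2}+\lambda_2 w+\lambda_1.
\end{equation*}
Since this polynomial has degree at most $2$ and the map $\theta\mapsto w$ is a bijection from $\C/\Z$ to $\C^{*}$, $c_{\lambda}(\theta)$ has at most two zeros (counted with multiplicity) in $\C/\Z$. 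This settles the first assertion.

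Next I would determine the real zeros by parameterizing $w=e^{i\phi}$ with $\phi=2\pi(\theta+\alpha/2)\in\R$, and splitting $c_\lambda(\theta)=0$ into its real and imaginary parts:
\begin{equation*}
(\lambda_1+\lambda_3)\cos\phi+\lambda_2=0,\qquad (\lambda_3-\lambda_1)\sin\phi=0.
\end{equation*}
From this pair I would run a case split. In the anisotropic case $\lambda_1\neq\lambda_3$, the second equation forces $\sin\phi=0$, hence $\phi\in\{0,\pi\}\pmod{2\pi}$; the first then becomes $\pm(\lambda_1+\lambda_3)+\lambda_2=0$. Since $\lambda_2>0$ and $\lambda_1+\lambda_3\ge 0$, the only possibility is $\lambda_1+\lambda_3=\lambda_2$ with $\phi=\pi$, which yields exactly one real root at $\theta=\tfrac{1}{2}-\tfrac{\alpha}{2}$. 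To confirm it is simple, I would differentiate: $c_{\lambda}'(\theta)=2\pi i(\lambda_3 e^{i\phi}-\lambda_1 e^{-i\phi})$, which at $\phi=\pi$ equals $2\pi i(\lambda_1-\lambda_3)\neq 0$.

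In the isotropic case $\lambda_1=\lambda_3$, the imaginary equation is automatic, and the remaining equation $2\lambda_3\cos\phi=-\lambda_2$ has real solutions precisely when $2\lambda_3\ge\lambda_2$, i.e.\ $\lambda_1+\lambda_3\ge\lambda_2$. This places $\lambda$ in $\mathrm{III}_{\lambda_1=\lambda_3}$ (up to the boundary), produces the stated equation for the roots, and at the boundary value $\lambda_1+\lambda_3=\lambda_2$ forces $\cos\phi=-1$, so $\phi=\pi$, giving a root at $\theta=\tfrac{1}{2}-\tfrac{\alpha}{2}$; since this accounts for both solutions of the quadratic in $w$, it is a double root. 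This exhausts both bullets of the observation. There is no real technical obstacle here; the only point requiring a moment's care is checking that no other zeros can appear in each case, which is handled automatically by the quadratic-in-$w$ count from the first step.
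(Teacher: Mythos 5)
The paper does not prove this Observation --- it is imported from \cite{AJM} --- so there is no internal proof to compare against; but your argument is correct and is the standard one, and it matches the computation the paper itself performs in Lemma \ref{analyticexp}, where $w\,c_{\hl}$ is factored as $\frac{\lambda_1}{\lambda_2}(w-y_+)(w-y_-)$ with $w=e^{2\pi i(\theta+\alpha/2)}$ and the roots are tested against the unit circle. Your quadratic-in-$w$ count, the real/imaginary split on $|w|=1$, the derivative check for simplicity at $\phi=\pi$, and the vanishing discriminant $\lambda_2^2-4\lambda_1\lambda_3=0$ in the isotropic boundary case are all sound. One small caveat: your computation shows that real roots occur exactly when ($\lambda_1=\lambda_3$ and $\lambda_1+\lambda_3\ge\lambda_2$) or ($\lambda_1\ne\lambda_3$ and $\lambda_1+\lambda_3=\lambda_2$); neither condition by itself forces $\lambda_1+\lambda_3\ge 1$, so the clause ``$\lambda\in\mathrm{III}$'' in the stated necessary condition (and your parenthetical ``this places $\lambda$ in $\mathrm{III}_{\lambda_1=\lambda_3}$'') is not literally a consequence of the root analysis --- it reflects the fact that the Observation is only invoked for $\lambda$ already in the self-dual regime, where the extra inequality holds. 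This is a defect of the statement's phrasing rather than of your proof, and it is harmless for every use made of the Observation in the paper.
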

\begin{rem}\label{obsrem}
By the definition of the duality transformation $\sigma$, Observation \ref{sing} implies that $c_{\hl}(\theta)$ has singular point if and only if $\lambda\in \mathrm{III}_{\lambda_1=\lambda_3}$ or $\lambda\in \mathrm{III}_{\lambda_1\neq\lambda_3}\cap \{\lambda_1+\lambda_3=1\}$.
\end{rem}
It will be clear in Section 4 that presence of singularities of $c_{\hl}(\theta)$ indeed simplifies the proof of empty point spectrum of $H_{\lambda,\alpha,\theta}$.

\section{Lemmas}
\begin{lemma}\label{analyticexp}
For $\lambda\in \mathrm{III}_{\lambda_1\neq \lambda_3}\cap \{\lambda_1+\lambda_3>1\}$, when $\lambda_3>\lambda_1$, we have
\begin{align*}
\frac{c_{\hl}(\theta)}{|c|_{\hl}(\theta)}=e^{-2\pi i (\theta+\frac{\alpha}{2})+if(\theta)}\ \ \mathrm{and}\ \  \frac{\tilde{c}_{\hl}(\theta)}{|c|_{\hl}(\theta)}=e^{2\pi i (\theta+\frac{\alpha}{2})-if(\theta)},
\end{align*} 
for a real analytic function $f(\theta)$ on $\T$ with $\int_{\T} f(\theta)\mathrm{d}\theta =0 $.
\end{lemma}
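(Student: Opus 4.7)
The plan is to factor out the non-analytic angular part of $c_{\hl}(\theta)$ explicitly, leaving an analytic nowhere-vanishing factor whose argument provides $f$. Using $\hl=(\lambda_3/\lambda_2, 1/\lambda_2, \lambda_1/\lambda_2)$ and setting $z=e^{2\pi i(\theta+\alpha/2)}$, a direct computation gives
\begin{equation*}
c_{\hl}(\theta) \;=\; e^{-2\pi i(\theta+\alpha/2)} \cdot g(\theta), \qquad g(\theta) \;:=\; \frac{\lambda_1}{\lambda_2}(z-z_+)(z-z_-),
\end{equation*}
where $z_\pm$ are the roots of $\lambda_1 w^2 + w + \lambda_3$. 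The point of the factorization is that $g$ is a Laurent polynomial in $z$ with no negative powers, so once we locate the $z_\pm$ outside $|z|=1$, $g$ will have winding number zero along $\mathbb{T}$ and $\log g$ admits a continuous (in fact real analytic) branch.

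The second step is the root-location claim $|z_\pm|>1$. The product of roots is $z_+ z_- = \lambda_3/\lambda_1>1$ (using $\lambda_3>\lambda_1$) and the sum is $-1/\lambda_1<0$. When the roots are complex (i.e.\ $4\lambda_1\lambda_3>1$), $|z_\pm|^2 = \lambda_3/\lambda_1 > 1$ directly. When the roots are real, a short calculation shows that $|z_\pm|>1$ is equivalent to $\lambda_1+\lambda_3>1$, which is exactly our hypothesis; this is the case-split that I expect to be the most fiddly. Combined with Remark \ref{obsrem}, which guarantees $c_{\hl}$ has no singularity on $\mathbb{T}$ in this regime (so $|z|=1$ is never a root), we conclude $z_\pm$ lie strictly outside the closed unit disk.

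Since $g$ is analytic and nonvanishing on a neighborhood of $\mathbb{T}$ and has winding number zero along $\mathbb{T}$, there is a real analytic branch $f_0(\theta) := \mathrm{Im}\log g(\theta)$ on $\mathbb{T}$. On $\mathbb{T}$ we have $|g(\theta)| = |c_{\hl}(\theta)| = |c|_{\hl}(\theta)$ since $|e^{-2\pi i(\theta+\alpha/2)}|=1$, so $c_{\hl}(\theta)/|c|_{\hl}(\theta) = e^{-2\pi i(\theta+\alpha/2)+if_0(\theta)}$, and conjugating gives the corresponding identity for $\tilde c_{\hl}/|c|_{\hl}$.

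The last step is to normalize so that $\int_\mathbb{T} f = 0$. Since any two branches differ by a constant in $2\pi\Z$, it suffices to check $\int_\mathbb{T} f_0 \in 2\pi\Z$. For $|z_j|>1$ fixed, expanding $\log(e^{i\psi}-z_j) = \log(-z_j)+\log(1-e^{i\psi}/z_j)$ and using that the Taylor series of $\log(1-e^{i\psi}/z_j)$ in $e^{i\psi}$ has no constant term, one obtains
\begin{equation*}
\frac{1}{2\pi}\int_0^{2\pi}\mathrm{Im}\log(e^{i\psi}-z_j)\,d\psi \;\equiv\; \arg(-z_j) \pmod{2\pi}.
\end{equation*}
Summing over $j\in\{+,-\}$, the mean of $f_0$ equals $\arg(z_+z_-) = \arg(\lambda_3/\lambda_1)=0 \pmod{2\pi}$, so a suitable shift by a multiple of $2\pi$ produces the desired $f$ with $\int_\mathbb{T} f=0$, completing the proof.
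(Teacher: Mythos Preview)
Your proof is correct and follows essentially the same approach as the paper: both factor $c_{\hl}(\theta)=e^{-2\pi i(\theta+\alpha/2)}\cdot\frac{\lambda_1}{\lambda_2}(z-y_+)(z-y_-)$, verify via the same complex/real case split that $|y_\pm|>1$ under the hypothesis $\lambda_1+\lambda_3>1$, and use this to produce a well-defined real analytic argument with zero mean. The only cosmetic difference is that the paper passes through the ratio $c_{\hl}/\tilde{c}_{\hl}$ before extracting the argument, whereas you take $\mathrm{Im}\log g$ directly and compute its mean explicitly; your normalization step is in fact slightly more explicit than the paper's.
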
   
\begin{proof}
By the definition of $c_{\hl}(\theta)$ we have
\begin{align}
c_{\hl}(\theta)
=&\frac{\lambda_3}{\lambda_2} e^{-2\pi i (\theta+\frac{\alpha}{2})}+\frac{1}{\lambda_2}+\frac{\lambda_1}{\lambda_2} e^{2\pi i (\theta+\frac{\alpha}{2})}\\
=&\frac{\lambda_1}{\lambda_2} e^{-2\pi i (\theta+\frac{\alpha}{2})}(e^{2\pi i (\theta+\frac{\alpha}{2})}-y_+)(e^{2\pi i (\theta+\frac{\alpha}{2})}-y_-),
\end{align}
where $y_{\pm}=\frac{-1 \pm \sqrt{1-4\lambda_1\lambda_3}}{2\lambda_1}$. Note that
\begin{align}\label{ypmdef}
y_+=\overline{y_-}\ \mathrm{with}\ |y_{+}|&=|y_{-}|=\sqrt{\frac{\lambda_3}{\lambda_1}}>1,\ \ \mathrm{when}\ 1\leq 2\sqrt{\lambda_1\lambda_3},\\
y_+, y_-\in \R\ \mathrm{with}\ |y_+|&>|y_-|=\frac{2\lambda_3}{\lambda_1+\sqrt{1-4\lambda_1\lambda_3}}>1,\ \ \mathrm{when}\ \lambda_1+\lambda_3>1>2\sqrt{\lambda_1\lambda_3}.
\end{align}
Note that 
\begin{align}\label{1}
\frac{c_{\hl}(\theta)}{|c|_{\hl}(\theta)}=\sqrt{\frac{c_{\hl}(\theta)}{\tilde{c}_{\hl}(\theta)}}=e^{-2\pi i (\theta+\frac{\alpha}{2})}\sqrt{\frac{(e^{2\pi i(\theta+\frac{\alpha}{2})}-y_{+})(e^{2\pi i (\theta+\frac{\alpha}{2})}-y_{-})}{(e^{-2\pi i (\theta+\frac{\alpha}{2})}-y_{+})(e^{-2\pi i (\theta+\frac{\alpha}{2})}-y_{-})}}.
\end{align}
By (\ref{ypmdef}), we have
\begin{align}\label{arg}
\int_{\T} \arg \frac{(e^{2\pi i(\theta+\frac{\alpha}{2})}-y_{+})(e^{2\pi i (\theta+\frac{\alpha}{2})}-y_{-})}{(e^{-2\pi i (\theta+\frac{\alpha}{2})}-y_{+})(e^{-2\pi i (\theta+\frac{\alpha}{2})}-y_{-})} \mathrm{d}\theta=0,
\end{align}
and
\begin{align}\label{norm}
|\frac{(e^{2\pi i(\theta+\frac{\alpha}{2})}-y_{+})(e^{2\pi i (\theta+\frac{\alpha}{2})}-y_{-})}{(e^{-2\pi i (\theta+\frac{\alpha}{2})}-y_{+})(e^{-2\pi i (\theta+\frac{\alpha}{2})}-y_{-})}|\equiv 1.
\end{align}
Thus there exists a real analytic function $g(\theta)$ on $\T$ such that
\begin{align}
\frac{(e^{2\pi i(\theta+\frac{\alpha}{2})}-y_{+})(e^{2\pi i (\theta+\frac{\alpha}{2})}-y_{-})}{(e^{-2\pi i (\theta+\frac{\alpha}{2})}-y_{+})(e^{-2\pi i (\theta+\frac{\alpha}{2})}-y_{-})}=e^{ig(\theta)},
\end{align}
with $\int_{\T}g(\theta)\mathrm{d}\theta=0$. Taking $f(\theta)=g(\theta)/2$ yields the desired the result.
$\hfill{} \Box$
\end{proof}

\begin{lemma}\label{uniform0}
There is a subsequence $\{\frac{p_{m_l}}{q_{m_l}}\}$ of the continued fraction approximants of $\alpha$ so that 
for any analytic function $f$ on $\T$ with $\int_{\T} f(\theta)\mathrm{d}\theta=0$, we have 
\begin{align*}
\lim_{l\rightarrow \infty} f(x)+f(x+\alpha)+ \cdots +f(x+q_{m_l}\alpha-\alpha)=0
\end{align*}
uniformly in $x\in \T$.
\end{lemma}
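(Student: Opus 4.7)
My proof proposal is to expand $f$ in Fourier series on $\T$. Since $f$ is real analytic with zero integral, $f(\theta)=\sum_{n\neq 0}\hat f(n)e^{2\pi i n\theta}$ with $|\hat f(n)|\le Ce^{-\eta|n|}$ for some $C,\eta>0$. Summing the geometric series in $k$ produces
\[
S_{q_m}f(x):=\sum_{k=0}^{q_m-1}f(x+k\alpha)=\sum_{n\neq 0}\hat f(n)\,e^{2\pi i n x}\,D_n,\qquad D_n:=\frac{\sin(\pi n q_m\alpha)}{\sin(\pi n\alpha)},
\]
so the problem reduces to bounding the kernel $|D_n|$ uniformly in $n$ and summing against the Fourier coefficients.

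I would split the Fourier sum at $|n|=q_m$. For $|n|\ge q_m$ the trivial bound $|D_n|\le q_m$ combined with the exponential tail of $\hat f$ yields a contribution of order $q_m e^{-\eta q_m}$, which vanishes for any irrational $\alpha$. For $0<|n|<q_m$ I would use the best-approximation inequality $\|n\alpha\|_\T\ge \|q_{m-1}\alpha\|_\T\ge 1/(2q_m)$ together with $|\sin(\pi n q_m\alpha)|\le \pi|n|\|q_m\alpha\|_\T\le \pi|n|/q_{m+1}$ to obtain $|D_n|\le \pi|n|q_m/q_{m+1}$, whence the low-frequency contribution is at most $(\pi q_m/q_{m+1})\sum_n|n||\hat f(n)|=:C_f\,q_m/q_{m+1}$. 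Altogether,
\[
\|S_{q_m}f\|_\infty\le C_f\,\frac{q_m}{q_{m+1}}+C_f'\,q_m e^{-\eta q_m}.
\]

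This bound tends to zero along any subsequence with $q_{m_l+1}/q_{m_l}\to\infty$. When $\alpha$ has unbounded partial quotients such a subsequence is immediate: take any $\{m_l\}$ along which the partial quotient $a_{m_l+1}\to\infty$. The factor $q_{m_l}/q_{m_l+1}\to 0$ depends only on $\alpha$, and $q_{m_l}e^{-\eta q_{m_l}}\to 0$ for every $\eta>0$, so the same $\{m_l\}$ works simultaneously for every analytic zero-mean $f$. When $\alpha$ has bounded partial quotients the first term above is bounded below by a positive constant and the Fourier-side estimate alone does not close the argument; in that regime I would invoke the cohomological equation instead: boundedness of the $a_m$ forces $\|n\alpha\|_\T\ge c/|n|$, so $g(\theta):=\sum_{n\neq 0}\hat f(n)(e^{2\pi i n\alpha}-1)^{-1}e^{2\pi i n\theta}$ defines an analytic function, and telescoping gives $S_{q_m}f(x)=g(x+q_m\alpha)-g(x)$, whence $\|S_{q_m}f\|_\infty\le \|g'\|_\infty\|q_m\alpha\|_\T\le L/q_{m+1}\to 0$ already along the full sequence.

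The main obstacle is exactly this dichotomy: neither the crude Fourier estimate nor the cohomological telescoping alone works for every irrational $\alpha$, so the proof naturally splits into the bounded- and unbounded-partial-quotient cases, with the subsequence chosen accordingly.
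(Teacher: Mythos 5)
Your proof is correct and follows the same two-pronged strategy as the paper: solve the cohomological equation and telescope when $\alpha$ is poorly approximated by rationals in the relevant sense, and otherwise bound the Dirichlet-type kernel directly along a sparse subsequence of denominators. The only real difference is where the dichotomy is drawn. The paper splits on $\beta(\alpha)=0$ versus $\beta(\alpha)>0$ (with $\beta=\limsup_m \ln q_{m+1}/q_m$), using the cohomological equation in the first case and, in the second, a subsequence with $q_{m_l+1}\ge e^{\beta q_{m_l}/2}$, so that its cruder low-frequency estimate $c\,q_{m_l}^3/q_{m_l+1}$ (obtained by bounding $|\hat f(n)|$ by a constant) still vanishes. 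You split on bounded versus unbounded partial quotients; this closes because you retain the decay of $\hat f(n)$ in the low-frequency sum and obtain the sharper bound $C_f\,q_{m_l}/q_{m_l+1}$, which only requires $q_{m_l+1}/q_{m_l}\to\infty$, a condition automatically available along a subsequence once the partial quotients are unbounded. Both dichotomies exhaust all irrational $\alpha$, and in both arguments the subsequence is chosen from $\alpha$ alone, independently of $f$, as the lemma requires; your version buys a slightly weaker arithmetic condition on the subsequence at the cost of the mildly stronger (but free, given analyticity) summability of $|n||\hat f(n)|$.
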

\begin{proof}
Suppose $f$ is analytic on $|\mathrm{Im}\theta| \leq \delta_0$, then $|\hat{f}(n)|\leq ce^{-2\pi \delta_0 |n|}$ for some constant $c>0$.
\subparagraph{Case 1}
If $\beta(\alpha)=0$, then by solving the coholomogical equation we get $f(x)=h(x+\alpha)-h(x)$ for some analytic $h(x)$. Then
\begin{align*}
  &\lim_{m\rightarrow \infty}(f(x)+f(x+\alpha)+\cdots+f(x+q_m\alpha-\alpha)) \\
=&\lim_{m\rightarrow \infty}(h(x+q_m\alpha)-h(x))=0
\end{align*}
uniformly in $x$.

\subparagraph{Case 2}
If $\beta(\alpha) > 0$,
choose a sequence $m_l$ such that $q_{m_l+1}\geq e^{\frac{\beta}{2}q_{m_l}}$.
Then
\begin{align*}
        & | f(x)+f(x+\alpha)+\cdots+f(x+q_{m_l}\alpha-\alpha) | \\
      =& |\sum_{|n|\geq 1} \hat{f}(n)(1+e^{2\pi in \alpha}+\cdots+e^{2\pi i n(q_{m_l}-1)\alpha})e^{2\pi i nx}| \\
      =& |\sum_{|n|\geq 1}\hat{f}(n)\frac{1-e^{2\pi i n q_{m_l} \alpha}}{1-e^{2\pi i n \alpha}}e^{2\pi i nx}| \\
 \leq &\sum_{1\leq |n|\leq q_{m_l}-1} c \left|\frac{1-e^{2\pi i n q_{m_l} \alpha}}{1-e^{2\pi i n \alpha}}\right|
         +\sum_{|n|\geq q_{m_l}}ce^{-2\pi \delta_0 |n|}q_{m_l} \\
 \leq & c\frac{q_{m_l}^3}{q_{m_l+1}}+cq_{m_l} e^{-2\pi \delta_0 q_{m_l}} \rightarrow 0\ \ \mathrm{as}\ l\rightarrow\infty
\end{align*} 
uniformly in $x$.
$\hfill{} \Box$
\end{proof}

\section{Consequence of point spectrum}
This part follows from \cite{AJM}. We present the material here for completeness and readers' convenience.

Suppose $\{u_n\}$ is an $l^2(\Z)$ solution to $H_{{\lambda},\alpha,\theta}u=Eu$, where ${\lambda}=(\lambda_1, \lambda_2, \lambda_3)$. 
This means
\begin{equation}\label{evequation}
c_{\lambda}(\theta+n\alpha) u_{n+1}+\tilde{c}_{\lambda}{(\theta+(n-1)\alpha)}u_{n-1}+2\cos(2\pi(\theta+n\alpha))u_n=Eu_n.
\end{equation}
Let $u(x)=\sum_{n\in \Z}u_ne^{2\pi i nx}\in L^2(\T)$. 
Multiplying (\ref{evequation}) by $e^{2\pi i nx}$ and then summing over $n$, we get
\begin{equation}\label{E1}
e^{2\pi i \theta}c_{\hat{\lambda}}(x)u(x+\alpha)+e^{-2\pi i \theta}{\tilde{c}_{\hat{\lambda}}(x-\alpha)}u(x-\alpha)+2\cos2\pi x\ u(x)=\frac{E}{\lambda_2} u(x),
\end{equation}
where $\hat{\lambda}=(\frac{\lambda_3}{\lambda_2}, 1, \frac{\lambda_1}{\lambda_2})$.
If we multiply (\ref{evequation}) by $e^{-2\pi i nx}$ and sum over $n$, we get
\begin{equation}\label{E2}
e^{-2\pi i \theta}c_{\hat{\lambda}}(x)u(-x-\alpha)+e^{2\pi i \theta}\tilde{c}_{\hat{\lambda}}{(x-\alpha)}u(-x+\alpha)+2\cos2\pi x\ u(-x)=\frac{E}{\lambda_2} u(-x).
\end{equation}
Thus writing (\ref{E1}), (\ref{E2}) in terms of matrices, we get
\begin{align}\label{conju1}
&\frac{1}{c_{\hat{\lambda}}(x)}
\left(\begin{array}{cc}
\frac{E}{\lambda_2}-2\cos 2\pi x & -\tilde{c}_{\hat{\lambda}}(x-\alpha) \\
c_{\hat{\lambda}}(x)    &  0
\end{array}\right)
\left(\begin{array}{cc}
u(x) & u(-x) \notag\\
e^{-2\pi i \theta}u(x-\alpha)   &  e^{2\pi i \theta}u(-(x-\alpha)) 
\end{array}\right) \\
=& \left(\begin{array}{cc}
u(x+\alpha) & u(-(x+\alpha)) \\
e^{-2\pi i \theta}u(x)   &  e^{2\pi i \theta}u(-x)
\end{array} \right)
\left(\begin{array}{cc}
e^{2\pi i \theta} & 0 \\
0 &  e^{-2\pi i \theta}
\end{array}\right)
\end{align}
Let $M_{\theta}(x)\in L^2(\T)$ be defined by
\begin{equation*}
M_{\theta}(x)=\left( \begin{array}{cc}
u(x) & u(-x)\\
e^{-2\pi i \theta}u(x-\alpha)   &  e^{2\pi i \theta}u(-(x-\alpha)) 
\end{array}\right).
\end{equation*}
Let
\begin{equation*}
A_{\hat{\lambda},E/{\lambda_2}}(x)=\frac{1}{c_{\hat{\lambda}}(x)}
\left(\begin{array}{cc}
\frac{E}{\lambda_2}-2\cos{2\pi x} & -\tilde{c}_{\hat{\lambda}}(x-\alpha) \\
c_{\hat{\lambda}}(x)    &  0
\end{array}\right)
\end{equation*}
be the transfer matrix associated to $H_{\hat{\lambda},\alpha, \theta}$
and 
\begin{equation*}
R_{\theta}=\left(\begin{array}{cc}
e^{2\pi i \theta} & 0 \\
0 &  e^{-2\pi i \theta}
\end{array}\right)
\end{equation*}
be the constant rotation matrix.
Then (\ref{conju1}) becomes
\begin{align}\label{conju2}
A_{\hat{\lambda},E}(x) M_{\theta}(x)=M_{\theta}(x+\alpha)R_{\theta}.
\end{align}
Taking determinant, we have the following proposition.
\begin{prop}\cite{AJM}
If $\theta$ is $\alpha$-irrational, then
\begin{align}\label{det=}
|\det M_{\theta}(x)|=\frac{b}{|c|_{\hat{\lambda}}(x-\alpha)}
\end{align}
for some constant $b > 0$ and a.e.$x \in \T$.
\end{prop}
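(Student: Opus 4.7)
The plan is a two-step ergodic/Fourier argument. In Step 1, I derive a cohomological equation for $|\det M_{\theta}(x)|$ by taking the determinant of the conjugation identity \eqref{conju2} and use ergodicity of the irrational rotation $T_\alpha:x\mapsto x+\alpha$ to conclude that $|c|_{\hat{\lambda}}(x-\alpha)\,|\det M_\theta(x)|$ is a.e.\ equal to some nonnegative constant $b$. In Step 2, I rule out $b=0$ using $\alpha$-irrationality of $\theta$.

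For Step 1, a direct computation gives $\det A_{\hat{\lambda},E}(x)=\tilde c_{\hat{\lambda}}(x-\alpha)/c_{\hat{\lambda}}(x)$ and $\det R_\theta=1$. Taking determinants in \eqref{conju2} and passing to absolute values (using $|c_{\hat{\lambda}}|=|\tilde c_{\hat{\lambda}}|=|c|_{\hat{\lambda}}$ on $\T$) yields
\[
|c|_{\hat{\lambda}}(x-\alpha)\,|\det M_\theta(x)|\;=\;|c|_{\hat{\lambda}}(x)\,|\det M_\theta(x+\alpha)|\quad\text{for a.e.\ }x.
\]
Hence the measurable function $F(x):=|c|_{\hat{\lambda}}(x-\alpha)\,|\det M_\theta(x)|$ satisfies $F\circ T_\alpha=F$ a.e.; ergodicity of the irrational rotation forces $F\equiv b$ for some $b\ge 0$.

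For Step 2, I argue by contradiction. Suppose $b=0$, i.e., $\det M_\theta(x)=0$ a.e. Let $v_1,v_2$ denote the columns of $M_\theta$; \eqref{conju2} decomposes into $A_{\hat{\lambda},E}(x)v_j(x)=e^{(-1)^{j+1}2\pi i\theta}v_j(x+\alpha)$ for $j=1,2$. Since $u\not\equiv 0$, the rank-$\le 1$ condition lets me write $v_2=\mu v_1$ a.e.\ for a measurable $\mu$, and substituting into the cocycle relations forces the cohomological equation
\[
\mu(x+\alpha)\;=\;e^{4\pi i\theta}\,\mu(x)\quad\text{for a.e.\ }x.
\]
Squaring the first-component identity $u(-x)=\mu(x)u(x)$ and integrating over $\T$ gives $\int|\mu|^2|u|^2\,dx=\|u\|_{L^2}^2$; combined with $T_\alpha$-invariance of $|\mu|$ and ergodicity (so that $|\mu|$ is a.e.\ a constant $c$), this forces $c^2\|u\|^2=\|u\|^2$, hence $|\mu|\equiv 1$ a.e. In particular $\mu\in L^\infty(\T)\subset L^2(\T)$. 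Fourier expanding $\mu=\sum_m\mu_m e^{2\pi imx}$ converts the cohomological equation into $\mu_m(e^{2\pi im\alpha}-e^{4\pi i\theta})=0$ for every $m\in\Z$. Since $\theta$ is $\alpha$-irrational, $2\theta\notin\Z\alpha+\Z$, so $e^{4\pi i\theta}\ne e^{2\pi im\alpha}$ for every $m$; hence $\mu_m=0$ for all $m$, so $\mu\equiv 0$, contradicting $|\mu|=1$ a.e.

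The main obstacle I expect is the careful measure-theoretic handling of the possibly positive-measure set on which $u$, and hence $v_1$, vanishes: one must use the rank-$\le 1$ condition to show that $v_2$ also vanishes on $\{v_1=0\}$, and then extend $\mu$ measurably to all of $\T$ in a way that preserves the cohomological equation a.e., so that the ergodicity and Fourier steps apply verbatim. Once these technicalities are resolved, the ergodic/Fourier argument above closes the proof, and the positivity of $b$ follows.
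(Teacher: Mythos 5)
Your argument is correct, and it is worth noting that the paper itself does not prove this proposition: it is quoted from \cite{AJM}, with only the first step (\lq\lq taking determinant\rq\rq) indicated. Your Step 1 is exactly that step made explicit ($\det A_{\hat\lambda,E}=\tilde c_{\hat\lambda}(x-\alpha)/c_{\hat\lambda}(x)$, $\det R_\theta=1$, ergodicity of the irrational rotation), and your Step 2 supplies the genuinely nontrivial content, the positivity of $b$, which is where the hypothesis that $\theta$ is $\alpha$-irrational enters; this reconstructs the standard Aubry-duality argument of \cite{AJM}. The one place you flag but do not fully close --- the possibility that $v_1$ vanishes on a set of positive measure --- is handled by the same mechanism you already use elsewhere: since $A_{\hat\lambda,E}(x)$ is invertible for a.e.\ $x$ and $A_{\hat\lambda,E}(x)v_1(x)=e^{2\pi i\theta}v_1(x+\alpha)$, the set $\{v_1=0\}$ is invariant under $T_\alpha$ modulo null sets, hence by ergodicity has measure $0$ or $1$; measure $1$ would force $u\equiv 0$ in $L^2(\T)$, contradicting that $u$ is a nonzero eigenfunction. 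With that observation, $\mu$ is defined a.e.\ and your cohomological, ergodic and Fourier steps go through verbatim, so the proof is complete.
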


\section{Regions $\mathrm{III}_{\lambda_1=\lambda_3}$ and $\mathrm{III}_{\lambda_1\neq \lambda_3}\cap \{\lambda_1+\lambda_3=1\}$}
We will show the following lemma.
\begin{lemma}
If $\theta$ is $\alpha$-irrational, then for $\lambda\in\mathrm{III}_{\lambda_1=\lambda_3}$ or $\lambda\in\mathrm{III}_{\lambda_1\neq \lambda_3}\cap \{\lambda_1+\lambda_3=1\}$, $H_{\lambda, \alpha, \theta}$ has no point spectrum.
\end{lemma}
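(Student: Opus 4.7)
The plan is to argue by contradiction using the $L^2(\T)$ formalism developed in Section 4. Suppose $\{u_n\}\in \ell^2(\Z)$ is a nonzero eigenfunction of $H_{\lambda,\alpha,\theta}$ at energy $E$, and form $u(x)=\sum_n u_n e^{2\pi i nx}\in L^2(\T)$. Then each of the four entries of $M_\theta(x)$ is an $L^2(\T)$ function, so $\det M_\theta(x)$, being a difference of products of two $L^2$ functions, belongs to $L^1(\T)$. This global integrability bound is the only input on the eigenfunction side of the argument.

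The next step is to invoke the proposition at the end of Section 4, which uses $\alpha$-irrationality of $\theta$, to obtain
\[
|\det M_\theta(x)|=\frac{b}{|c|_{\hat\lambda}(x-\alpha)}\quad \text{for a.e. } x\in \T,
\]
with $b>0$. The problem then reduces to showing that this right-hand side fails to be in $L^1(\T)$ for the parameter values in question.

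Here Remark \ref{obsrem} does the heavy lifting: in exactly the two parameter regions named in the lemma, namely $\mathrm{III}_{\lambda_1=\lambda_3}$ and $\mathrm{III}_{\lambda_1\neq\lambda_3}\cap \{\lambda_1+\lambda_3=1\}$, the analytic function $c_{\hat\lambda}(\theta)$ has at least one real zero $\theta_0\in \T$. Since $|c|_{\hat\lambda}(\theta)=|c_{\hat\lambda}(\theta)|$ on $\T$ and any zero of a nonconstant analytic function has some finite order $k\geq 1$, we have $|c|_{\hat\lambda}(\theta)\asymp |\theta-\theta_0|^k$ near $\theta_0$. Therefore $1/|c|_{\hat\lambda}(\cdot-\alpha)$ is not locally integrable near $\theta_0+\alpha$, so not in $L^1(\T)$, contradicting $|\det M_\theta|\in L^1(\T)$ together with $b>0$.

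The only conceptually nontrivial ingredient is the matching of the two parameter sets to the existence of real zeros of $c_{\hat\lambda}$, which is precisely the content of Observation \ref{sing} applied under the duality $\sigma$, as recorded in Remark \ref{obsrem}. Once that dictionary is in place, the proof collapses to the one-line integrability contradiction above, and no further analysis (no Aubry duality, no cocycle theory) is needed in these cases.
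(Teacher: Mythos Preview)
Your proposal is correct and follows exactly the same approach as the paper's proof: use Remark~\ref{obsrem} to obtain a real zero of $c_{\hat\lambda}$, conclude $1/|c|_{\hat\lambda}\notin L^1(\T)$, and contradict this with $\det M_\theta\in L^1(\T)$ via equation~(\ref{det=}). Your write-up is in fact more explicit than the paper's two-line argument, spelling out why $\det M_\theta\in L^1$ and why a real zero of an analytic function forces non-integrability of its reciprocal.
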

\begin{proof}
According to Remark \ref{obsrem}, we have $c_{\hat{\lambda}}(x_0)=0$ for some $x_0\in \T$.
Note that presence of singularity implies $\frac{1}{c_{\hat{\lambda}}(x)}\notin L^1(\T)$.
Thus by (\ref{det=}), $\det M_{\theta}(x)\notin L^1(\T)$. This contradicts with $M_{\theta}(x)\in L^2(\T)$.
$\hfill{} \Box$
\end{proof}

\section{Regions $\mathrm{III}_{\lambda_1\neq \lambda_3}\cap \{\lambda_1+\lambda_3>1\}$}
Without loss of generality, we assume $\lambda_3>\lambda_1$. Fix $\theta$. Denote $\det M_{\theta}(x)=g(x)$ for simplicity.
\begin{lemma}
If $\theta$ is $\alpha$-irrational, then $H_{\lambda,\alpha,\theta}$ has no point spectrum in the anisotropic self-dual region.
\end{lemma}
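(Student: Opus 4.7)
The plan is to argue by contradiction: assume $u\in\ell^2(\Z)$ is an eigenvector of $H_{\lambda,\alpha,\theta}$ at energy $E$, pass to $L^2(\T)$ via Fourier transform to form $M_\theta$ as in Section~4, and leverage two features of the anisotropic region with $\lambda_1+\lambda_3>1$. First, by Remark~\ref{obsrem} the function $c_{\hl}$ has no real zeros, so $|c|_{\hl}$ is bounded on $\T$ away from $0$ and $\infty$, and therefore (via (\ref{det=})) both $\det M_\theta$ and $1/\det M_\theta$ lie in $L^\infty(\T)$. Second, iterating the cohomological identity $A_{\hl,E/\lambda_2}(x)M_\theta(x)=M_\theta(x+\alpha)R_\theta$ gives, on determinants,
\[
\det A^{(k)}_{\hl,E/\lambda_2}(x) \;=\; \frac{\det M_\theta(x+k\alpha)}{\det M_\theta(x)}.
\]

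I would take $k=q_{m_l}$ with $\{q_{m_l}\}$ the subsequence from Lemma~\ref{uniform0}. Since $\|q_{m_l}\alpha\|_\T\to 0$, $L^2$-continuity of translation gives $\det M_\theta(\cdot+q_{m_l}\alpha)\to\det M_\theta$ in $L^1(\T)$; dividing by $\det M_\theta$ (bounded below) yields $\det A^{(q_{m_l})}_{\hl,E/\lambda_2}\to 1$ in measure. Independently, from $\det A_{\hl,E/\lambda_2}(x)=\tilde c_{\hl}(x-\alpha)/c_{\hl}(x)$ and Lemma~\ref{analyticexp}, a telescoping of modulus and phase should produce
\[
\det A^{(k)}_{\hl,E/\lambda_2}(x)=\frac{|c|_{\hl}(x-\alpha)}{|c|_{\hl}(x+(k-1)\alpha)}\,\exp\!\Big(i\big[4\pi kx+2\pi k(k-1)\alpha-2 S_k f(x)-f(x-\alpha)+f(x+(k-1)\alpha)\big]\Big),
\]
with $S_k f(x)=\sum_{j=0}^{k-1} f(x+j\alpha)$. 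Along $k=q_{m_l}$, Lemma~\ref{uniform0} kills $S_{q_{m_l}} f$ uniformly, while $(q_{m_l}-1)\alpha\to-\alpha$ in $\T$ combined with continuity of $f$ and $|c|_{\hl}$ kills the boundary $f$-terms and the magnitude ratio uniformly in $x$. Hence $\det A^{(q_{m_l})}_{\hl,E/\lambda_2}(x)=c_l\,e^{4\pi i q_{m_l}x}(1+o(1))$ uniformly, where $c_l:=e^{2\pi i q_{m_l}(q_{m_l}-1)\alpha}\in\T$ is constant in $x$.

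Combining the two gives $c_l e^{4\pi i q_{m_l} x}\to 1$ in measure. After passing to a further subsequence along which $c_l\to c_\infty\in\T$, boundedness upgrades this to $L^1$-convergence of $e^{4\pi i q_{m_l} x}$ to the nonzero constant $c_\infty^{-1}$; comparing $0$-th Fourier coefficients ($0$ vs.\ $c_\infty^{-1}\neq 0$) is the desired contradiction.

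The main technical step is the argument computation that funnels $\arg\det A^{(k)}$ into a single linear-in-$x$ character $e^{4\pi ikx}$, modulo an $x$-independent constant and uniformly negligible error. The most apparent obstacle is the quadratic term $2\pi k(k-1)\alpha$: it need not be small modulo $1$ along any subsequence (for bounded-type $\alpha$ the ratio $q_m/q_{m+1}$ stays bounded away from $0$), so no choice of subsequence kills it. The saving grace is that it is \emph{constant in $x$}, hence absorbed into $c_l$ and irrelevant to the final Fourier argument, which only exploits the fact that the pure character $e^{4\pi i q_{m_l} x}$ has zero mean on $\T$.
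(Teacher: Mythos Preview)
Your argument is correct and is essentially the paper's proof: both iterate the determinant of (\ref{conju2}) along $k=q_{m_l}$, invoke Lemmas~\ref{analyticexp} and~\ref{uniform0} to reduce $\det A^{(q_{m_l})}_{\hl,E/\lambda_2}$ to a pure character $c_l\,e^{4\pi i q_{m_l}x}$ up to a uniform $o(1)$, and then use $L^1$-continuity of translation together with (\ref{det=}) (which in this regime makes $|g|$ bounded above and below) to force that character toward a nonzero constant. The only cosmetic difference is the last line---the paper keeps the weight $|g|$ and bounds $\int|1-c_l e^{4\pi i q_{m_l}x}|\,|g|\,dx$ from below via a level-set estimate, whereas you divide out by $g$ and finish by comparing zeroth Fourier coefficients.
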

\begin{proof}
Taking determinant in (\ref{conju2}), we get:
\begin{align*}
\frac{\tilde{c}_{\hat{\lambda}}(x-\alpha)}{c_{\hat{\lambda}}(x)}g(x)=g(x+\alpha). 
\end{align*}
This implies 
\begin{align}\label{gkexp}
g(x+k\alpha)=\frac{\tilde{c}_{\hl}{(x+k\alpha-2\alpha)} \cdots \tilde{c}_{\hl}{(x)}\  \tilde{c}_{\hl}{(x-\alpha)}}{c_{\hl}(x+k\alpha-\alpha) \cdots c_{\hl}(x+\alpha) c_{\hl}(x) } g(x).
\end{align}
Taking $k=q_{m_l}$, as in Lemma \ref{uniform0}, on one hand, since $g(x)$ is an $L^1$ function, as the determinant of an $L^2$ matrix, and $\lim_{l\rightarrow\infty}\|q_{m_l}\alpha\|_{\T}= 0$, we have
\begin{align*}
\lim_{l\rightarrow \infty} \|g(x+q_{m_l}\alpha)-g(x)\|_{L^1}=0.
\end{align*}
By (\ref{gkexp}), this implies
\begin{align}\label{AI1}
  0=\lim_{l\rightarrow \infty} \|g(x+q_{m_l}\alpha)-g(x)\|_{L^1}
=\lim_{l\rightarrow \infty}  \int \left|1-\frac{\prod_{j=-1}^{q_{m_l}-2}\tilde{c}_{\hl}(x+j\alpha)}{\prod_{j=0}^{q_{m_l}-1}c_{\hl}(x+j\alpha)}\right| \cdot |g(x)| \mathrm{d}x.
\end{align}
On the other hand, by Lemma \ref{analyticexp}
\begin{align}\label{Al2}
        & \lim_{l\rightarrow \infty} \int \left|1-\frac{\prod_{j=-1}^{q_{m_l}-2}\tilde{c}_{\hl}(x+j\alpha)}{\prod_{j=-1}^{q_{m_l}-1}c_{\hl}(x+j\alpha)}\right|\cdot  |g(x)|\mathrm{d}x \notag\\
     = & \lim_{l\rightarrow \infty} \int \left| 1-\frac{|c|_{\hl}(x-\alpha)}{|c|_{\hl}(x+q_{m_l}\alpha-\alpha)} 
      e^{-i\left(\sum_{j=-1}^{q_{m_l}-2}f(x+j\alpha)+\sum_{j=0}^{q_{m_l}-1}f(x+j\alpha)\right)} e^{4\pi i q_{m_l} x} e^{2\pi i q_{m_l} (q_{m_l}-1)\alpha}\right|\cdot        |g(x)| \mathrm{d}x \notag\\
\geq & \liminf_{l\rightarrow \infty} \left( \int |1-e^{4\pi i q_{m_l} x+2\pi i q_{m_l}^2\alpha}| |g(x)| \mathrm{d}x \right.\notag\\
        &\ \ \ \ \ \ \ \     \left. -\int \left| 1-\frac{|c|_{\hl}(x-\alpha)}{|c|_{\hl}(x+q_{m_l}\alpha-\alpha)} 
                               e^{-i\left(\sum_{j=-1}^{q_{m_l}-2}f(x+j\alpha)+\sum_{j=0}^{q_{m_l}-1}f(x+j\alpha)\right)} e^{-2\pi i   
                               q_{m_l}\alpha}\right|\cdot |g(x)| \mathrm{d}x  \right)  \notag\\
     := &\liminf_{l\rightarrow \infty} (I_1-I_2).
\end{align}
Combining the fact $\|q_{m_l}\alpha\|_{\T}\rightarrow 0$ with Lemma \ref{uniform0}, we get pointwise convergence,
\begin{align*}
\frac{|c|_{\hl}(x-\alpha)}{|c|_{\hl}(x+q_{m_l}\alpha-\alpha)} 
                               e^{-i\left(\sum_{j=-1}^{q_{m_l}-2}f(x+j\alpha)+\sum_{j=0}^{q_{m_l}-1}f(x+j\alpha)\right)} e^{-2\pi i   
                               q_{m_l}\alpha}\rightarrow 1\ \ \mathrm{as}\ l\rightarrow\infty.
\end{align*}
Then by dominated convergence theorem, we get $\lim_{l\rightarrow \infty}I_2=0$.
Then (\ref{Al2}) implies that for any small constant $\delta>0$,
\begin{align*}
        &\lim_{l\rightarrow \infty} \|g(x+q_{m_l}\alpha)-g(x)\|_{L^1}\\
\geq &\liminf_{l\rightarrow \infty}I_1\\
\geq &\liminf_{l\rightarrow \infty}  \int_{\|2q_{m_l}x+q_{m_l}^2\alpha\|_{\T}\geq\delta} 4\delta |g(x)| \mathrm{d}x,
\end{align*}
where $|\{x:\|2q_{m_l}x+q_{m_l}^2 \alpha\|\geq\delta\}| \triangleq |F_{m_l,\delta}| = 1-2\delta$.
Thus

\begin{align*}
        &\lim_{l\rightarrow \infty} \|g(x+q_{m_l}\alpha)-g(x)\|_{L^1}\\
     \geq &\liminf_{l\rightarrow \infty} ( 4\delta\|g\|_{L^1}-4\delta\int_{F^c_{{m_l},\delta}} |g(x)| \mathrm{d}x ) \\
\geq &\liminf_{l\rightarrow \infty} ( 4\delta\|g\|_{L^1}-8\delta^2 \|g\|_{L^{\infty}} ).    
\end{align*}
By (\ref{det=}) $|g(x)|=\frac{b}{|c|_{\hl}(x-\alpha)}$ for some constant $b>0$, thus $ \|g\|_{L^1}$, $ \|g\|_{L^{\infty}}$ are positive finite numbers, so one can choose $\delta \sim 0$ such that 
$4\delta \|g\|_{L^1}-8\delta^2 \|g\|_{L^{\infty}}$ is strictly positive. This contradicts with (\ref{AI1}). $\hfill{} \Box$
\end{proof}

\section*{Acknowledgement}
This research was partially supported by the NSF DMS-1401204. I would like to thank Svetlana Jitomirskaya for useful discussions.

\bibliographystyle{amsplain}

\end{document}